\newtheorem{theorem}{Theorem}
\newtheorem{theorem*}{Theorem}
\journal{Journal of Computer and System Sciences}
\begin{document}

\begin{frontmatter}

\title{Maximizing the number of integer pairs summing to powers of $2$ via graph labeling and solving restricted systems of linear (in)equations}
\author{Max A. Alekseyev
}

\affiliation{organization={The George Washington University}, 
            addressline={},
            city={Washington},
            postcode={},
            state={DC},
            country={USA}
}

\begin{abstract}
We address the problem of finding sets of integers of a given size with a maximum number of pairs summing to powers of $2$.
By fixing particular pairs, this problem reduces to finding a labeling of the vertices of a given graph with pairwise distinct integers such that the endpoint labels for each edge sum up to a power of $2$.
We propose an efficient algorithm for this problem, 
which at its core relies on another algorithm that, given two sets of linear homogeneous polynomials with integer coefficients, computes all variable assignments to powers of $2$ that nullify polynomials from the first set but not from the second.
With the proposed algorithms, we determine the maximum size of graphs of order $n$ that admit such a labeling for all $n\leq 21$, and construct the maximum admissible graphs for $n\leq 20$.
We also identify the minimal forbidden subgraphs of order $\leq 11$, whose presence prevents the graphs from having such a labeling.
\end{abstract}

\begin{keyword}
graph labeling \sep extremal set \sep algorithm

\MSC 05C35 \sep \MSC 05C85 \sep \MSC 05D05 \sep \MSC 11Y50

\end{keyword}

\end{frontmatter}

\section{Introduction}

In April 2021, Dan Ullman and Stan Wagon published ``Problem of the Week 1321''~\cite{Wagon2021} (as a generalization of AMM Problem 12272~\cite{ShahAli2021}),
where they introduced a function $f(A)$ of a finite set $A$ of integers as the number of $2$-element subsets of $A$ that sum to a power of $2$.
They gave an example $f\big(\{-1, 3, 5\}\big) = 3$ and further defined a function
$g(n)$ as the maximum of $f(A)$ over all $n$-element sets $A$. The problem asked for a proof that $g(10)\geq 14$, which was quickly improved to $g(10)\geq 15$ by the readers, 
and Pratt further showed\footnote{Pratt's proof apparently is unpublished and only mentioned in~\cite[Solution]{Wagon2021}.} that, in fact, $g(10) = 15$.

Multiple researchers have noted that computing $g(n)$ has a natural interpretation as finding a maximum graph of order $n$, where the vertices are labeled with pairwise distinct integers,
and the sum of the endpoint labels for each edge is a power of $2$ (see Fig.~\ref{fig:msg16} for examples).
In March 2022, M. S. Smith proved\footnote{Smith emailed a proof to Neil Sloane, and it is quoted in the OEIS entry {\tt A352178}~\cite{OEIS}.} that such a graph cannot contain a cycle $C_4$, limiting the candidate graphs to well-studied \emph{squarefree graphs}~\cite{Clapham1989}. 
Smith's result made it easy to establish the values of $g(n)$ for all $n\leq 9$, and led to the creation of the sequence {\tt A352178} in the Online Encyclopedia of Integer Sequences (OEIS)~\cite{OEIS}.
It also provided a nontrivial upper bound for $g(n)$, namely the maximum size of squarefree graphs of order $n$ (given by sequence {\tt A006855} in the OEIS).

The problem received further attention in September 2022 after Neil Sloane presented it on the popular Numberphile Youtube channel~\cite{Haran2022}. 
This was followed by a few improvements, including the value\footnote{Apparently, the exact value of $g(10)$ was not present in the OEIS back then, and most people were unaware that Pratt had already established that $g(10)=15$.} $g(10)=15$ from Matthew Bolan~\cite{Bolan2022}, and independently the same value and $g(11)=17$ from Firas Melaih~\cite{Melaih2022}.
These results were obtained via a graph-theoretical treatment of the problem by manually analyzing a few candidate graphs.

At the same time, two methods were proposed to obtain lower bounds for $g(n)$, giving those for the first few hundreds of values of $n$, which are listed in the sequences {\tt A347301} and {\tt A357574} in the OEIS.

In the present paper, we propose an algorithm for testing the \emph{admissibility} of a given unlabeled graph, i.e., whether its vertices can be labeled with pairwise distinct integers such that
the sum of the endpoint labels for each edge is a power of $2$. 
At its core, this algorithm relies on another proposed algorithm that, given two sets of linear homogeneous polynomials with integer coefficients, computes all variable assignments to powers of $2$ that nullify polynomials from the first set but not from the second. The polynomials from the first and second sets are referred to as equations and inequations, respectively.

We use our algorithms to bound $g(n)$ from above, and together with the known lower bounds to establish the values of $g(n)$ for $n$ in the interval $[12,21]$.
Also, interpreting Smith's result as saying that the cycle $C_4$ is a \emph{minimal forbidden subgraph} (MFS), i.e., inadmissible graphs in which all proper subgraphs are admissible, 
we find all MFSs of orders up to $11$. Namely, we show that there are no MFSs of order $5$, $6$, $8$, or $9$, while there are $2$ MFSs of order $7$ (Fig.~\ref{fig:mfs7}), 
$15$ MFSs of order $10$, and $77$ MFSs of order $11$.
We also enumerate the \emph{maximum admissible graphs} (MAGs), i.e. admissible graphs of the maximum size, of orders up to $20$. 
We establish that there are $4$, $28$, $2$, $18$, $2$, $22$, and $2$ MAGs of order from $14$ to $20$, respectively.

\section{Algorithm for testing graph admissibility}

A given graph $G$ on $n$ vertices with $m$ edges is admissible if and only if the following matrix equation is soluble:
\begin{equation}\label{mateq}
M \cdot L = X,
\end{equation}
where 
\begin{itemize}
\item
$M$ is the $m\times n$ incidence matrix of $G$ with rows and columns indexed by the edges and vertices of $G$, that is, $M$ is a $\{0,1\}$-matrix with each row containing exactly two $1$'s;
\item
$L = (l_1,l_2,\dots,l_n)^T$ is a column vector of pairwise distinct integer vertex labels;
\item
$X = (x_1,x_2,\dots,x_m)^T$ is a column vector formed by powers of $2$ representing the sums of edges' endpoint labels.\footnote{The elements of $X$ are not required to be distinct.}
\end{itemize}
Both $L$ and $X$ in this equation are unknown and have to be determined.

We start by solving \eqref{mateq} for $L$ in terms of $X$. That is, we compute a (partial) solution of the form $l_i = p_i(x_1,\dots,x_m)$ for $i\in\{1,2,\dots,n\}$, where each $p_i$ is a linear polynomial with rational coefficients.
Such a solution exists if and only if $K_l\cdot X = 0$, where $K_l$ is an integer matrix with rows that form a basis of the left kernel of $M$. 
It will be seen that the performance of our algorithm is sensitive to the number of nonzero elements in $K_l$, and thus we assume that $K_l$ is sparse, which can be achieved (to some extent) with LLL reduction~\cite{Bremner2011}.
Let $E$ be the set of elements of $K_l\cdot X$, which are homogeneous linear polynomials with integer coefficients representing linear equations in $x_1,\dots,x_m$.

Let $K_r$ be a matrix whose columns form a basis of the right kernel of $M$. For a connected graph $G$, it is known that $K_r$ has size $n\times t$, where $t=1$ or $t=0$ depending on whether $G$ is bipartite~\cite{Nuffelen1976}.
We find it convenient to view an $n\times 0$ matrix as composed of $n$ empty rows (and thus having all rows equal).
Adding a linear combination of the columns of $K_r$ to a solution $L$ of equation \eqref{mateq} turns it into another solution $L'$, and furthermore all solutions with the same $X$ can be obtained this way.
The following theorem implies that for any set of pairwise distinct rows of $K_r$, we can find a linear combination of the columns of $K_r$ 
such that the corresponding elements of $L'$ are also pairwise distinct.

\begin{theorem}\label{th:distinct}
Let $v$ be an integer column vector of size $k\geq 0$, and $A$ be a $k\times s$ integer matrix with pairwise distinct rows. 
Then there exists an integer linear combination of the columns of $A$ such that adding it to $v$ results in a vector with pairwise distinct elements.
\end{theorem}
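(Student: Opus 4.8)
The plan is to reformulate the distinctness requirement as an avoidance condition and then show that the set of forbidden coefficient vectors is too thin to exhaust the integer lattice. Write $a_1,\dots,a_k$ for the rows of $A$ (each of length $s$), and let $\alpha=(\alpha_1,\dots,\alpha_s)^T$ be the integer coefficient vector to be chosen, so that the $i$-th entry of $v+A\alpha$ equals $v_i+a_i\cdot\alpha$. Two entries $i\neq i'$ collide precisely when $(a_i-a_{i'})\cdot\alpha=v_{i'}-v_i$. Since the rows of $A$ are pairwise distinct, each difference $a_i-a_{i'}$ is a nonzero integer row vector, so this equation cuts out a proper affine hyperplane $H_{i,i'}$ in $\alpha$-space. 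Hence it suffices to exhibit an integer point $\alpha$ lying on none of the finitely many hyperplanes $H_{i,i'}$ with $1\le i<i'\le k$.

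The key step is to collapse this $s$-dimensional avoidance problem to a one-parameter one. I would restrict attention to vectors of the special form $\alpha=(t,t^2,\dots,t^s)^T$ for an integer parameter $t$. Substituting this shape into the defining equation of $H_{i,i'}$ turns it into $\sum_{j=1}^{s}(A_{ij}-A_{i'j})\,t^{\,j}-(v_{i'}-v_i)=0$, a polynomial equation in the single variable $t$. Because $a_i\neq a_{i'}$, at least one coefficient $A_{ij}-A_{i'j}$ with $j\ge 1$ is nonzero, so this polynomial is not identically zero and therefore has at most $s$ integer roots. Ranging over all $\binom{k}{2}$ pairs, only finitely many values of $t$ can land $\alpha$ on some $H_{i,i'}$, so any of the remaining (infinitely many) integers $t$ yields a valid integer combination.

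Finally I would dispatch the degenerate cases: when $s=0$ the hypothesis that $A$ has pairwise distinct rows forces $k\le 1$, whence $v$ trivially has distinct entries and the empty combination works, while $k=0$ is vacuous. The only genuine obstacle is the integrality of $\alpha$: over an infinite field one could simply invoke that a finite union of proper affine subspaces cannot cover the whole space, but here $\alpha$ must be a lattice point, and a single hyperplane may contain many lattice points. The polynomial substitution above is precisely what resolves this difficulty, trading multidimensional lattice avoidance for the elementary fact that a nonzero univariate polynomial has finitely many roots. An alternative resolution is a direct counting argument: bound the number of lattice points of each $H_{i,i'}$ inside the box $\{0,\dots,N\}^s$ by $(N+1)^{s-1}$, and choose $N$ large enough that $\binom{k}{2}(N+1)^{s-1}<(N+1)^s$, forcing some lattice point of the box to avoid every $H_{i,i'}$.
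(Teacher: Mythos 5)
Your proof is correct, including the treatment of the degenerate cases and of the integrality issue you flag, but it follows a genuinely different route from the paper's. The paper argues by an explicit two-step reduction: for $s>1$ it multiplies $A$ by the geometric vector $(1,(d+1),(d+1)^2,\dots,(d+1)^{s-1})^T$, where $d$ is the spread of the entries of $A$, so that distinct rows map to distinct integers (a base-$(d+1)$ digit argument); this reduces everything to the single-column case $s=1$, which it settles by the explicit choice $v+A\cdot(t+1)$ with $t$ the spread of $v$. You instead recast distinctness as avoiding $\binom{k}{2}$ proper affine hyperplanes and collapse the $s$-dimensional lattice-avoidance problem to one parameter via the moment curve $(t,t^2,\dots,t^s)^T$, finishing with the fact that a nonzero univariate polynomial of degree at most $s$ has at most $s$ roots. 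The two arguments are cousins---your moment curve is the ``generic base'' version of the paper's fixed-base geometric vector---but the trade-offs differ. The paper's proof is fully constructive: it exhibits a concrete coefficient vector, namely $(t+1)\cdot(1,d+1,\dots,(d+1)^{s-1})^T$, which fits its role as the justification of a step in an algorithm. Your version, as stated, only asserts that a good $t$ exists, though it is easily made effective (some $t\in\{0,1,\dots,\binom{k}{2}s\}$ must work), and your alternative box-counting argument is explicitly effective since any $N\geq\binom{k}{2}$ suffices. In exchange, your formulation is more flexible and general: it shows that integer points avoid any finite family of proper affine subspaces, and it works verbatim over any infinite integral domain.
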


\begin{proof} If $s=0$, then with necessity we have $k=1$, and thus $v$ already has pairwise distinct elements. 

Let us prove the statement for $s=1$. In this case, $A$ represents a column vector with pairwise distinct elements. 
Let $t$ be the difference between the largest and smallest elements of $v$. It is easy to see that the vector $v + A\cdot (t+1)$ has pairwise distinct elements.

In the case of $s>1$, let $d$ be the difference between the largest and smallest elements of $A$.
Then the $k\times 1$ matrix $A' := A\cdot (1,(d+1),(d+1)^2,\dots,(d+1)^{s-1})^T$ has pairwise distinct elements, 
thus reducing the problem to the case $s=1$ considered above.
\end{proof}

Theorem~\ref{th:distinct} implies that we can restrict our attention only to the pairs of elements of $L$ corresponding to the equal rows in $K_r$.
For each pair of equal rows in $K_r$ with indices $i<j$, we compute $q(x_1,\dots,x_m) := (p_i(x_1,\dots,x_m)-p_j(x_1,\dots,x_m))c$, where $c$ is a positive integer factor making all coefficients of $q$ integer.
If $q$ is a zero polynomial, then condition $l_i\ne l_j$ is unattainable, and thus the graph $G$ is inadmissible.
On the other hand, if $q$ consists of just a single term with a nonzero coefficient, then the condition $l_i\ne l_j$ always holds, and we ignore such $q$.
In the remaining case, when $q$ contains two or more terms with nonzero coefficients, we add $q$ to the set $N$.
The resulting set $N$ consists of polynomials in $x_1,\dots,x_m$ that must take nonzero values on a solution to \eqref{mateq}.
We refer to such polynomials as \emph{inequations}.\footnote{We deliberately use the term \emph{inequation} to denote relationship $p\ne 0$ 
and to avoid confusion with inequalities traditionally denoting relationships $\geq$, $\leq$, $>$, or $<$.}

Our next goal is to solve the system of equations $E$ and inequations $N$ in powers of $2$, which we address in Section~\ref{sec:solvep2}.
For each solution $X=X_0$, we plug it into the system~\eqref{mateq} turning it into a standard matrix equation, which we then solve for $L$.
The resulting solution can be parametric with parameters representing the free variables in $X_0$ (if any) and the coefficients of a linear combination of the columns of $K_r$ when it is not empty. 
In either case, from such a solution Theorem~\ref{th:distinct} guarantees to produce a solution composed of pairwise distinct integers.

We outline the above description in Algorithm~\ref{alg:GraphSolve}. While this algorithm can produce all solutions (i.e., labelings of the graph vertices), when testing graph admissibility we stop it as soon as one solution is obtained.\footnote{Technically $\textsc{GraphSolve}$ is implemented as a SageMath/Python's generator, which produces solutions one after another on demand and which we can easily stop after obtaining just one solution (when it exists).}

\begin{algorithm}[!t]
\caption{An algorithm for solving system~\eqref{mateq} for a given graph $G$.}
\label{alg:GraphSolve}
\begin{algorithmic}[1]
\begin{footnotesize}
\Function{GraphSolve}{$G$}
\State Set $E:=\emptyset$ and $N:=\emptyset$
\State Let $m$ and $n$ be the size and order of $G$, respectively.
\State Construct the $m\times n$ incidence matrix $M$ of $G$ with rows and columns indexed by the edges and vertices of $G$
\State Compute a sparse basis $K_l$ of the left kernel of $M$. \Comment{We have $K_l\cdot M=0$.}
\State Let $X:=(x_1,\dots,x_m)^T$ be the column-vector formed by indeterminates.
\For {each row $r$ in $K_l$}
\State Add the polynomial $r\cdot X$ to $E$
\EndFor
\State Solve $ML=X$ for $L$ in terms of $X$, let $(p_1,\dots,p_n)$ be any particular solution.
\State Compute $K_r$ whose columns form a basis of the right kernel of $M$. \Comment{We have $M\cdot K_r=0$.}
\For {each $\{i,j\}\subset \{1,2,\dots,n\}$}
\If {$i$th and $j$th rows of $K_r$ are not equal}
\State \textbf{continue} to next subset $\{i,j\}$    \Comment{Per Theorem~\ref{th:distinct} we ignore such pair of indices.}
\EndIf
\State Set $q$ equal to a multiple of $p_i-p_j$ with integer coefficients
\If {$q=0$}
\State \Return {$\emptyset$} \Comment{No solutions with $l_i\ne l_j$.}
\EndIf
\If {$q$ contains two or more terms}
\State Add $q$ to $N$.
\EndIf
\EndFor
\State $S:=\emptyset$
\For {each $s$ in \Call{SolveInPowers}{$E,\ N$}} \Comment{$s$ is a map from $Y$ to linear polynomials in $Y$}
\State Set $x_i := 2^{s[y_i]}$ for each $i\in\{1,2,\dots,m\}$.
\State Solve $ML = X$ for $L$ composed of pairwise distinct integers and add the solution to $S$.   \Comment{Use Theorem~\ref{th:distinct} if needed.}
\EndFor
\State \Return {$S$}
\EndFunction
\end{footnotesize}
\end{algorithmic}
\end{algorithm}

\section{Solving a system of (in)equations in powers of $2$}\label{sec:solvep2}

For given finite sets $E$ and $N$ of nonzero linear polynomials in $x_1,x_2,\dots,x_m$ with integer coefficients, our goal is to find all tuples of nonnegative integers $(y_1,y_2,\dots,y_m)$ such that 
\[
\begin{split}
\forall p\in E:&\quad p(2^{y_1},2^{y_2},\dots,2^{y_m})=0, \\
\forall p\in N:&\quad p(2^{y_1},2^{y_2},\dots,2^{y_m})\ne 0.
\end{split}
\]

As simple as it sounds, the following theorem provides a foundation for our algorithm.

\begin{theorem}\label{th:eqval}
In any nonempty multiset of nonzero integers summing to $0$, there exist two elements with equal $2$-adic valuations.\footnote{Recall 
that the $2$-adic valuation of a nonzero integer $k$, denoted $\nu_2(k)$, equals the exponent of $2$ in the prime factorization of $k$, while $\nu_2(0)=\infty$.}
\end{theorem}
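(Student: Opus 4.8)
The plan is to argue by contradiction using a parity argument after normalizing by the smallest power of $2$ dividing the elements. Suppose, contrary to the claim, that we have nonzero integers $a_1,\dots,a_k$ summing to $0$ whose $2$-adic valuations $\nu_2(a_1),\dots,\nu_2(a_k)$ are pairwise distinct. A sum of nonzero integers equal to $0$ cannot have a single term, so in the relevant (nonempty) case $k\geq 2$; in particular the multiset of valuations has a well-defined minimum, and by distinctness this minimum is attained by a \emph{unique} element. Let $m=\min_i \nu_2(a_i)$ and let $a_j$ be that unique minimizer.

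First I would divide the identity $\sum_i a_i = 0$ by $2^m$, which is legitimate since every $a_i$ is divisible by $2^m$ and the right-hand side remains $0$. This replaces each $a_i$ by the integer $b_i := a_i/2^m$, still summing to $0$. By construction $\nu_2(b_j)=\nu_2(a_j)-m=0$, so $b_j$ is odd, whereas every other index $i\neq j$ satisfies $\nu_2(b_i)=\nu_2(a_i)-m\geq 1$, so $b_i$ is even.

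The key step is then a parity count: among $b_1,\dots,b_k$ exactly one term is odd and all the rest are even, hence $\sum_i b_i$ is odd and in particular nonzero, contradicting $\sum_i b_i=0$. This completes the argument. Equivalently, and without dividing, one obtains the same contradiction by reducing modulo $2^{m+1}$: the unique minimal-valuation term is congruent to $2^m$ while all other terms vanish, so the total is $\equiv 2^m \not\equiv 0 \pmod{2^{m+1}}$.

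I do not expect a genuine obstacle here; the only point requiring care is the appeal to \emph{uniqueness} of the minimal valuation, which is exactly where the distinctness hypothesis is consumed. If two elements shared the minimal valuation, the normalized sum could contain two odd terms and the parity argument would no longer force a contradiction---as it should not, since for instance $\{a,-a\}$ sums to $0$ and has a repeated valuation. I would also dispatch in one sentence the degenerate boundary case where the multiset has fewer than two elements, for which the assertion is vacuous or inapplicable.
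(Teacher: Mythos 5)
Your proof is correct and follows essentially the same route as the paper: both isolate an element of minimal $2$-adic valuation $m$ and observe that if it were the \emph{unique} minimizer, the total sum could not vanish. The only difference is presentational---you justify this by dividing through by $2^m$ and counting parity (one odd term plus even terms cannot sum to zero), while the paper notes directly that the sum would then have $2$-adic valuation $m$, hence be nonzero.
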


\begin{proof}
Let $S$ be a nonempty multiset of nonzero integers summing to $0$, and let $k$ be an element of $S$ with the smallest $2$-adic valuation $q:=\nu_2(k)$.
If all the other elements of $S$ have $2$-adic valuation greater than $q$, then the sum of all elements (which is $0$) has $2$-adic valuation $q$, which is impossible.
Hence, there exist at least two elements in $S$ that have $2$-adic valuation equal to $q$.
\end{proof}

Applying Theorem~\ref{th:eqval} to an equation $c_1x_1+\dots+c_mx_m\in E$, we conclude that
if only one of the coefficients $c_1,c_2,\dots,c_m$ is nonzero, then the system $(E,N)$ is insoluble. Otherwise, 
if there are two or more nonzero coefficients among $c_1,c_2,\dots,c_m$, then there exists a pair of indices $i<j$ such that $c_i\ne 0$, $c_j\ne 0$, and $\nu_2(c_ix_i) = \nu_2(c_jx_j)$, which implies
that we can make a substitution $x_i = 2^{\nu_2(c_j)-\nu_2(c_i)}x_j$ or $x_j = 2^{\nu_2(c_i)-\nu_2(c_j)}x_i$ (we pick one with integer coefficients). 
Then we proceed with making this substitution in $E$ and $N$, thus reducing the number of indeterminates. If this substitution does not nullify any polynomial in $N$, we proceed to solve the reduced system recursively. 
After exploring the pair $(i,j)$, we add a new inequation $2^{\nu_2(c_i)}x_i - 2^{\nu_2(c_j)}x_j$ to $N$
to avoid following the same substitutions in a different order and obtaining the same solutions in future, and proceed to the next pair of indices.

We outline the above description in Algorithm~\ref{alg:SolveInPowers}. For given sets $E$ and $N$ of linear equations and inequations in $x_1,\dots,x_m$ with integer coefficients, the
function \textsc{SolveInPowers}$(E,\ N)$ computes the set of their solutions in powers of $2$. 
Each solution is given in the form of a map $s$ from the set of variables $Y:=\{y_1,y_2,\dots,y_m\}$ to linear polynomials in these variables, representing the exponents in the powers of $2$.
Namely, $s$ sends every variable from $Y$ either to itself (when it is a free variable), or to a linear polynomial in the free variables.
For example, the map $\{ y_1 \to y_4+1,\ y_2\to y_2,\ y_3\to y_2+3,\ y_4\to y_4\}$ 
corresponds to the solution $(x_1,x_2,x_3) = (2^{y_4+1}, 2^{y_2}, 2^{y_2+3}, 2^{y_4})$, where $y_2$ and $y_4$ are free variables taking arbitrary nonnegative integer values.

\begin{algorithm}[!t]
\caption{An algorithm for solving a given system of linear equations $E$ and inequations $N$ over indeterminates from $X:=\{x_1,\dots,x_m\}$ in powers of $2$. It returns a set of maps $s$ from
variables $Y:=\{y_1,y_2,\dots,y_m\}$ to linear polynomials in these variables such that $(x_1,\dots,x_m) = (2^{s[y_1]},2^{s[y_2]},\dots,2^{s[y_m]})$ is a solution.}
\label{alg:SolveInPowers}
\begin{algorithmic}[1]
\begin{footnotesize}
\Function{SolveInPowers}{$E,\ N$}
\If {$E=\emptyset$}
\State \Return {$\{\text{the identity map}\}$}  \Comment{Every variable in $Y$ is free.}
\EndIf
\State Pick $c_1x_1+\dots+c_mx_m\in E$ with the smallest number of nonzero coefficients.
\State Let $I := \{ i\mid 1\leq i\leq m,\ c_i\ne 0\}$ be the set of indices of nonzero coefficients.
\State Set $S:=\emptyset$   \Comment{We accumulate solutions in $S$.}
\For {each $\{i,j\}\subseteq I$} \Comment{We iterate over all 2-element subsets of $I$.}
\State Possibly exchanging the values of $i$ and $j$, ensure that $d := \nu_2(c_i)-\nu_2(c_j)\geq 0$.
\State Compute $N'$ from $N$ by substituting $x_j \leftarrow 2^d x_i$ and excluding nonzero constant polynomials.
\If {$0\in N'$}
\State \textbf{continue} to the next pair $\{i,j\}$.
\EndIf
\State Add $x_j - 2^d x_i$ to $N$. \Comment{For future we disallow the equality $c_jx_j = c_i x_i$.}
\State Compute $E'$ from $E$ by substituting $x_j \leftarrow 2^d x_i$ and excluding zero polynomials.
\For {each $s$ in \Call{SolveInPowers}{$E',\ N'$}}  \Comment{$s$ is a map from $Y$ to linear polynomials in $Y$.}
\State Redefine $s[y_j] := s[y_i] + d$.
\State Add $s$ to set $S$.
\EndFor
\EndFor
\State \Return {$S$}
\EndFunction
\end{footnotesize}
\end{algorithmic}
\end{algorithm}

We note that Algorithm~\ref{alg:SolveInPowers} is a recursive branch-and-bound algorithm with a maximum recursion depth equal to $m-1$. In the worst case without bounding, at depth $d$ it would branch into $\binom{m-d}{2}=\frac{(m-d)(m-d-1)}2$ recursive calls, and thus its complexity can be estimated as $O\big((|E|+|N|)\cdot\frac{m!^2}{2^m}\big)$ algebraic operations. However, this naive estimate does not take into account the bounding controlled by the elements of $N$ and the restricted branching controlled by the choice of sparse elements of $E$. Although it is difficult to give an accurate complexity estimate, our computational results presented in Sections~\ref{sec:MFS}--\ref{sec:MAG} show that Algorithm~\ref{alg:SolveInPowers} and thus Algorithm~\ref{alg:GraphSolve} are quite efficient in practice 
(in particular, see Tables~\ref{tab:statMFS}, \ref{tab:statINAD}, \ref{tab:statMAG}).

\begin{figure}[!t]
\centering \includegraphics[width=.8\linewidth]{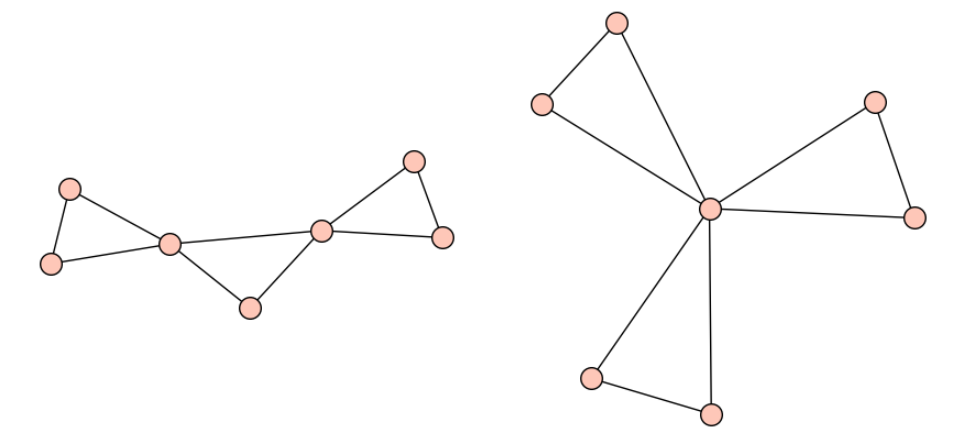}
\caption{Minimal forbidden subgraphs of order $7$.}
\label{fig:mfs7}
\end{figure}

\section{Minimal forbidden subgraphs}\label{sec:MFS}

We use the proposed algorithms to find minimal forbidden subgraphs of small order. 
It is easy to see that each MFS must be connected. Also, if an inadmissible graph contains a vertex of degree 1, then the graph without this vertex is also inadmissible. Therefore, the minimum degree of an MFS must be $\geq 2$.

It is almost trivial to verify that $C_4$ is the smallest MFS and the only one on $4$ vertices. 
Therefore, for $n>4$ we can restrict our attention to the connected squarefree graphs with the minimum degree $\geq 2$ as candidates, which we generate in SageMath~\cite{Sage} with the function \texttt{nauty\_geng()} based on \texttt{nauty} tool~\cite{nauty} 
supporting both connected (option \texttt{-c}) and squarefree (option \texttt{-f}) graphs as well as a lower bound on the minimum degree (option \texttt{-d\#}). This significantly speeds up the algorithm and eliminates the need to test the presence of MFS $C_4$ as a subgraph.

We search for MFSs, other than $C_4$ iteratively increasing their order, accumulating found MFSs in a set $S$ (initially empty). 
For each order, we iterate over the candidate graphs $G$ in the order of non-decreasing size, and check if $G$ contains any graphs from $S$ as a subgraph using the SageMath function \texttt{is\_subgraph()}. 
If $G$ contains any of the graphs from $S$, we go to the next candidate graph $G$.
Otherwise, we test the admissibility of $G$ by calling $\textsc{GraphSolve}(G)$. 
If $G$ is inadmissible, then it represents an MFS and we add it to $S$.
The described algorithm is outlined in Algorithm~\ref{alg:MFS}. 

\begin{algorithm}[!t]
\caption{An algorithm for iterative computing minimal forbidden subgraphs, other than $C_4$, of order up to $u$.}
\label{alg:MFS}
\begin{algorithmic}[1]
\begin{footnotesize}
\Function{FindMFS}{$u$}
\State $S := \emptyset$
\For {$n=5,\dots,u$}
\For {each connected squarefree graph $G$ of order $n$ and minimum degree $\geq 2$, iterated over such that the size of $G$ is nondecreasing}
\If {$G$ contains any graph from $S$ as a subgraph}
\State \textbf{continue} to next $G$
\EndIf
\If {\Call{GraphSolve}{$G$} is empty}
\State Add $G$ to the set $S$.
\EndIf
\EndFor
\EndFor
\State \Return {$S$}
\EndFunction
\end{footnotesize}
\end{algorithmic}
\end{algorithm}

We determine that the next smallest MFSs after $C_4$ are two graphs of order $7$ (Fig.~\ref{fig:mfs7}).
One of these graphs was previously shown to be inadmissible by Bolan while proving that $g(10)=15$~\cite{Bolan2022}.

\begin{figure}[!t]
\centering \includegraphics[width=1.0\linewidth]{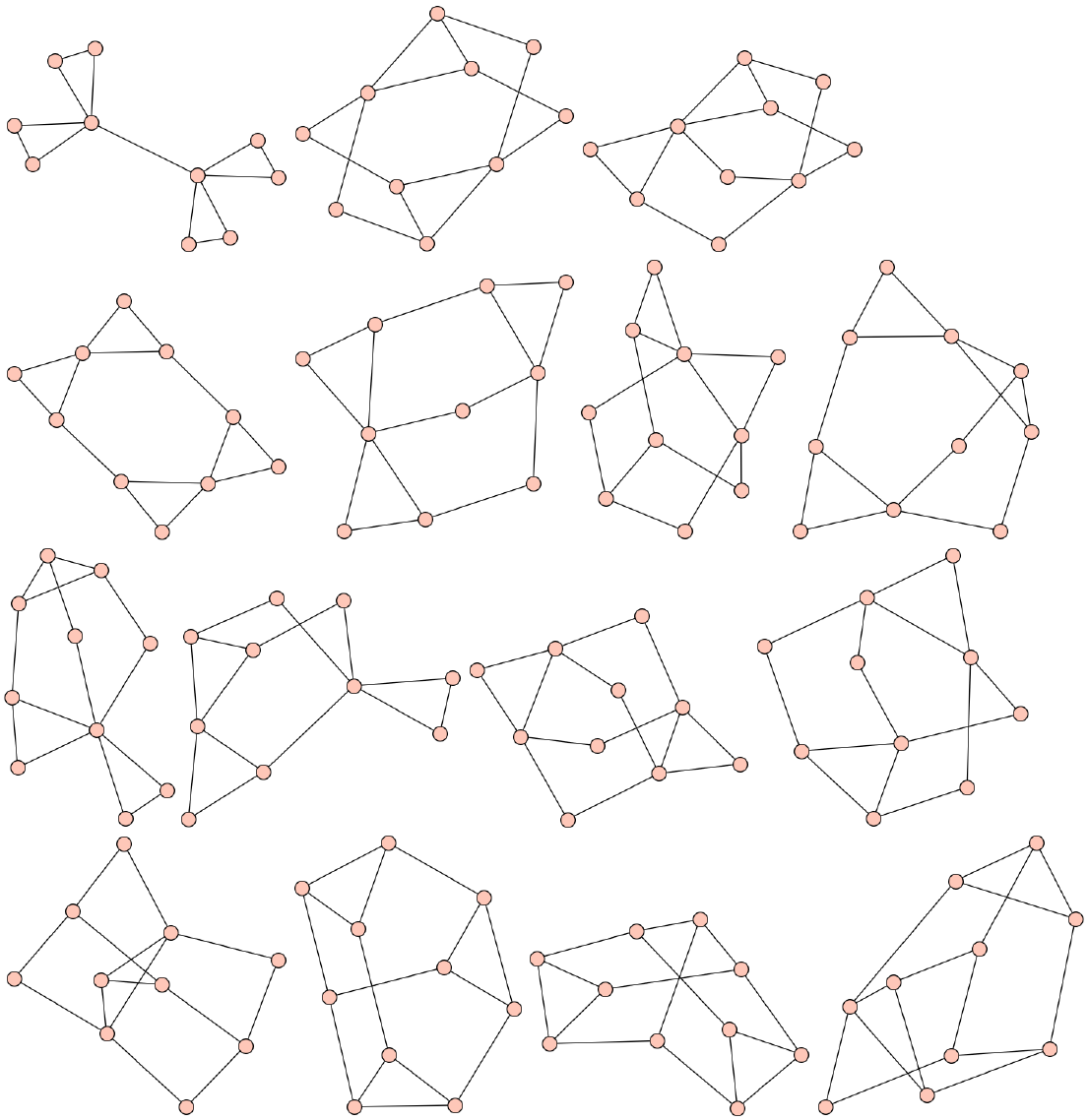}
\caption{Minimal forbidden subgraphs of order $10$.}
\label{fig:mfs10}
\end{figure}

There are no MFSs of order $8$ or $9$, but there are $15$ of them of order $10$ (Fig.~\ref{fig:mfs10}), and there are $77$ MFSs of order $11$.
We evaluate the performance\footnote{Here and below the performance is benchmarked at
a desktop computer with Intel Xeon E5-2670v2 2.50GHz CPU.}
of this computation in Table~\ref{tab:statMFS}.  
We use MFSs of order $\leq 10$ for quick filtering of some inadmissible graphs.

\begin{table}[!tb]
\begin{center}
\begin{tabular}{|c||c|c|c|c|c|c|c|}
\hline
Graph order & 5 & 6 & 7 & 8 & 9 & 10 & 11 \\
\hline\hline
\# candidate graphs & 2 & 3 & 10 & 28 & 112 & 533 & 3126 \\ 
\hline
\# graphs with an MFS & 0 & 0 & 0 & 1 & 12 & 64 & 528 \\ 
\hline
\# graphs tested w. $\textsc{GraphSolve}$ & 2 & 3 & 10 & 27 & 100 & 469 & 2598 \\ 
\hline
Average test time (sec.) & 1.0 & 1.1 & 1.2 & 1.7 & 2.3 & 3.5 & 2.4 \\
\hline
\end{tabular}
\end{center}
\caption{Performance benchmarks for computing MFSs of orders $5$ to $11$.}\label{tab:statMFS}
\end{table}

\section{Computing values of $g(n)$}\label{sec:gofn}

We use the proposed algorithms for computing values of $g(n)$ (sequence \texttt{A352178} in the OEIS) for $n$ in $[12,21]$, relying on the known values $g(n)$ for $n\leq 11$ and the lower 
bound $\ell(n)$ (sequence {\tt A347301} in the OEIS) for $n\geq 12$:
\begin{center}
\begin{footnotesize}
\begin{tabular}{c}
\begin{tabular}{|c|c|c|c|c|c|c|c|c|c|c|c|}
\hline
$n$ & 1 & 2 & 3 & 4 & 5 & 6 & 7 & 8 & 9 & 10 & 11 \\
\hline
$g(n)$ & 0 & 1 & 3 & 4 & 6 & 7 & 9 & 11 & 13 & 15 & 17 \\
\hline
\end{tabular}
\\
\\
\begin{tabular}{|c|c|c|c|c|c|c|c|c|c|c|}
\hline
$n$ & 12 & 13 & 14 & 15 & 16 & 17 & 18 & 19 & 20 & 21 \\
\hline
$\ell(n)$ & 19 & 21 & 24 & 26 & 29 & 31 & 34 & 36 & 39 & 41 \\
\hline
\end{tabular}
\end{tabular}
\end{footnotesize}
\end{center}
We will establish the following theorem:

\begin{theorem}\label{th:gell}
For each integer $n\in[12,21]$, we have $g(n)=\ell(n)$.
\end{theorem}

The remainder of this section is devoted to the proof of Theorem~\ref{th:gell}.
Our goal is to show that for each $n\in[12,21]$ we have $g(n)\leq \ell(n)$, which is equivalent to showing that every graph of order $n$ and size $\ell(n)+1$ is inadmissible.

\begin{theorem}\label{th:gbound}
For any integer $n>2$:
\begin{itemize}
\item[\rm (I)]
if an admissible graph of order $n$ and size $e$ exists, then its minimum degree is at least $e-g(n-1)$;

\item[\rm (II)]
$$g(n) \leq \left\lfloor \frac{n\cdot g(n-1)}{n-2}\right\rfloor.$$

\item[\rm (III)] if $$\frac{g(n-1)}{n-1} = \max_{k\in\{1,2,\dots,n-1\}} \frac{g(k)}k,$$ then
any admissible graph of order $n$ and size $e>\frac{n}{n-1}g(n-1)$ is connected.
\end{itemize}
\end{theorem}

\begin{proof}
Let $G$ be an admissible graph of order $n$ with $e$ edges. If $G$ has a vertex of degree smaller than $e-g(n-1)$, then removing it from $G$ results in an admissible graph of order $n-1$ with more than $g(n-1)$ edges. 
The contradiction shows that the degree of any vertex of $G$ is at least $e-g(n-1)$, proving the statement (I).
Furthermore, it implies that $G$ has at least $\tfrac{n(e-g(n-1))}2$ edges, that is, $e\geq \tfrac{n(e-g(n-1))}2$, implying that $e \leq \big\lfloor\frac{n\cdot g(n-1)}{n-2}\big\rfloor$. For an admissible graph of order $n$ with $e=g(n)$ edges, it implies $g(n) \leq \big\lfloor\tfrac{n\cdot g(n-1)}{n-2}\big\rfloor$, which proves the statement (II).

To prove the statement (III), suppose that we have an admissible graph $G$ of order $n$ and size $e>\frac{n}{n-1}g(n-1)$. Let $t$ be the number of its connected components, and let $s_1,\dots,s_t$ be their orders. Let us assume that $t\geq 2$, implying that $\max_{1\leq i\leq t} s_i \leq n-1$.
Clearly, we have $s_1+\dots+s_t = n$ and $e\leq g(s_1)+\dots+g(s_t)$. It follows that
$$\frac{g(s_1)+\dots+g(s_t)}{s_1+\dots+s_t}\geq \frac{e}{n} > \frac{g(n-1)}{n-1}.$$
On the other hand, since $\frac{g(s_1)+\dots+g(s_t)}{s_1+\dots+s_t}$ is the mediant fraction of $\frac{g(s_1)}{s_1}, \dots, \frac{g(s_t)}{s_t}$, we have
$$
\frac{g(s_1)+\dots+g(s_t)}{s_1+\dots+s_t} \leq \max_{1\leq i\leq t} \frac{g(s_i)}{s_i} \leq \max_{k\in\{1,2,\dots,n-1\}} \frac{g(k)}k = \frac{g(n-1)}{n-1}.
$$
The contradiction proves that we cannot have $t\geq 2$, i.e., $G$ is connected.
\end{proof}

Iteratively for each $n\in[12,21]$, Theorem~\ref{th:gbound}(III) from the established values $g(k)$ for all $k<n$ implies that any admissible graph of order $n$ and size $\geq\ell(n)$ is connected.
That is, in the venue of proving Theorem~\ref{th:gell} we can focus on the connected squarefree graphs only, which we refer to as \emph{candidate graphs}. 
We test the admissibility of a candidate graph $G$ by first checking the presence of any MFS of order $\leq 10$ as a subgraph in $G$, and then, if no MFS is present, by invoking the {\sc GraphSolve} algorithm for $G$. 

For order $n=12$, Theorem~\ref{th:gbound}(I) implies that if there is an admissible graph of size $\ell(12)+1=20$, then its minimum degree should be at least $20-g(11)=3$. Hence, to prove Theorem~\ref{th:gell} for $n=12$, we generate all candidate graphs of order $12$, size $20$, and minimum degree $\geq 3$, which we then test for admissibility.
There are $18$ candidate graphs and none are admissible, which proves $g(12)=19$. Similarly, for $n=13$, we generate all candidate graphs of order $13$, size $\ell(13)+1=22$, and minimum degree $\geq 22-g(12)=3$. There are $173$ such candidate graphs, and again none of them are admissible.

From $g(13)=21$, Theorem~\ref{th:gbound}(II) implies that $g(14)\leq 24$, which matches the lower bound. Therefore, we obtain $g(14)=24$ without any computation.

For order $n=15$, we test if there is any admissible graph of size $\ell(15)+1=27$. By Theorem~\ref{th:gbound}(I) such a graph should have the minimum degree $\geq 3$.
We generate $8,280$ such candidate graphs, but our check shows that none are admissible. Hence, $g(15)=26$. 

\begin{table}[!tb]
\begin{center}
\begin{tabular}{|c||c|c|c|c|}
\hline
Graph order & 12 & 13 & 15 & 20 \\
\hline\hline
\# candidate graphs & 18 & 173 & 8280 & 15,156 \\ 
\hline
\# graphs with an MFS & 16 & 159 & 8252 & 14,591 \\ 
\hline
\# graphs tested w. $\textsc{GraphSolve}$ & 2 & 14 & 28 & 565 \\ 
\hline
Average test time (sec.) & 6.5 & 17.5 & 24.5 & 247.9 \\
\hline
\end{tabular}
\end{center}
\caption{Performance benchmarks for testing (inadmissible) graphs of order $n$, size $\ell(n)+1$, and minimum degree $\geq 3$, for $n\in\{12,13,15,20\}$.}\label{tab:statINAD}
\end{table}

For order $n=16$, Theorem~\ref{th:gbound}(II) implies $g(16)\leq 29=\ell(16)$ and thus $g(16)=29$.

For order $n=17$, Theorem~\ref{th:gbound}(I) implies that any admissible graph of size $\ell(17)+1=32$ should have the minimum degree $\geq 3$. 
In fact, the minimum degree should be exactly $3$, since otherwise the size would be at least $17\cdot 4/2=34$.
There are $1,023,100$ such candidate graphs, which, in principle, are possible to inspect directly, although it would be quite time consuming.
Instead, we approached this problem from another angle: noticing that the removal of a vertex of degree $3$ from an admissible graph of order $17$ and size $32$ results 
in a maximum admissible graph (MAG) of order $16$. We constructed all MAGs of order $16$ as explained in Section~\ref{sec:MAG} and established that neither one extends 
to an admissible graph of order $17$ and size $32$, thus proving that $g(17)=\ell(17)=31$.

For order $n=18$, Theorem~\ref{th:gbound}(II) implies $g(18)\leq 34=\ell(18)$ and thus $g(18)=34$.

For order $n=19$, Theorem~\ref{th:gbound}(I) implies that any admissible graph of size $\ell(19)+1=37$ should have the minimum degree $\geq 3$. 
With necessity such a graph has a vertex of degree $3$ and we proceed similarly to the case $n=17$ above. 
We construct MAGs of order $18$ (see Section~\ref{sec:MAG}) and show that none of them can be extended to an admissible graph of order $19$ and size $37$, implying that $g(19)=\ell(19)=36$.

For order $n=20$, Theorem~\ref{th:gbound}(I) implies that any admissible graph of size $\ell(20)+1=40$ should have the minimum degree $\geq 4$, from where it follows that this graph is regular of degree $4$. There are $15,156$ such candidate graphs, all of which are inadmissible. Hence, $g(20)=\ell(20)=39$.

For order $n=21$, Theorem~\ref{th:gbound}(I) implies that any admissible graph of size $\ell(21)+1=42$ should have the minimum degree $\geq 3$. 
First, we construct MAGs of order $20$ and show that none of them can be extended to an admissible graph by adding a vertex of degree $3$. Second, we note a graph of order $21$, size $42$, and minimum degree $4$ is a $4$-regular graph. Let's assume that such an admissible graph $G$ exists. Since $G$ does not have a cycle of length $4$, its girth is either $3$ or $\geq 5$. If the girth of $G$ is $3$, then removing from $G$ vertices forming a $3$-cycle results in an admissible graph $G'$ of order $18$ and size $31$, composed of $3$ vertices of degree $2$ and $15$ vertices of degree $4$. Further removing from $G'$ two degree-$2$ vertices we obtain a MAG of order $16$, which however cannot contain a vertex of degree $2$ (Fig.~\ref{fig:msg16}). Hence, $G$ has girth $\geq 5$. There exist $8$ such graphs as computed by tool {\tt GENREG}~\cite{Meringer1999}, which are listed at~\cite{web_girth5}. However, our test shows that they all are inadmissible and therefore $G$ cannot exist, implying that $g(21)=\ell(21)=41$.

This concludes our proof of Theorem~\ref{th:gell}.
We evaluate the performance of the computations used in this proof in Table~\ref{tab:statINAD}.

\section{Maximum admissible graphs}\label{sec:MAG}

\begin{table}[!tb]
\begin{center}
\begin{tabular}{|c||c|c|c|c|c|}
\hline
Graph order & 14 & 15 & 16 & 17 & 18  \\
\hline\hline
\# candidate graphs & 2,184 & 33,732 & 243 & 5,847,788 & 287 \\ 
\hline
\# graphs with an MFS & 1,976 & 29,251 & 215 & 5,734,238 & 257 \\ 
\hline
\# graphs tested w. {\sc GraphSolve} & 208 & 4,481 & 28 & 113,550 & 30 \\ 
\hline
Average test time (sec.) & 30.9 & 44.2 & 51.0 & 81.4 & 79.1 \\
\hline
\end{tabular}
\end{center}
\caption{Performance benchmarks for identifying maximum admissible graphs.} 
\label{tab:statMAG}
\end{table}

\begin{figure}[!t]
\begin{tabular}{cc}
\includegraphics[width=.45\linewidth]{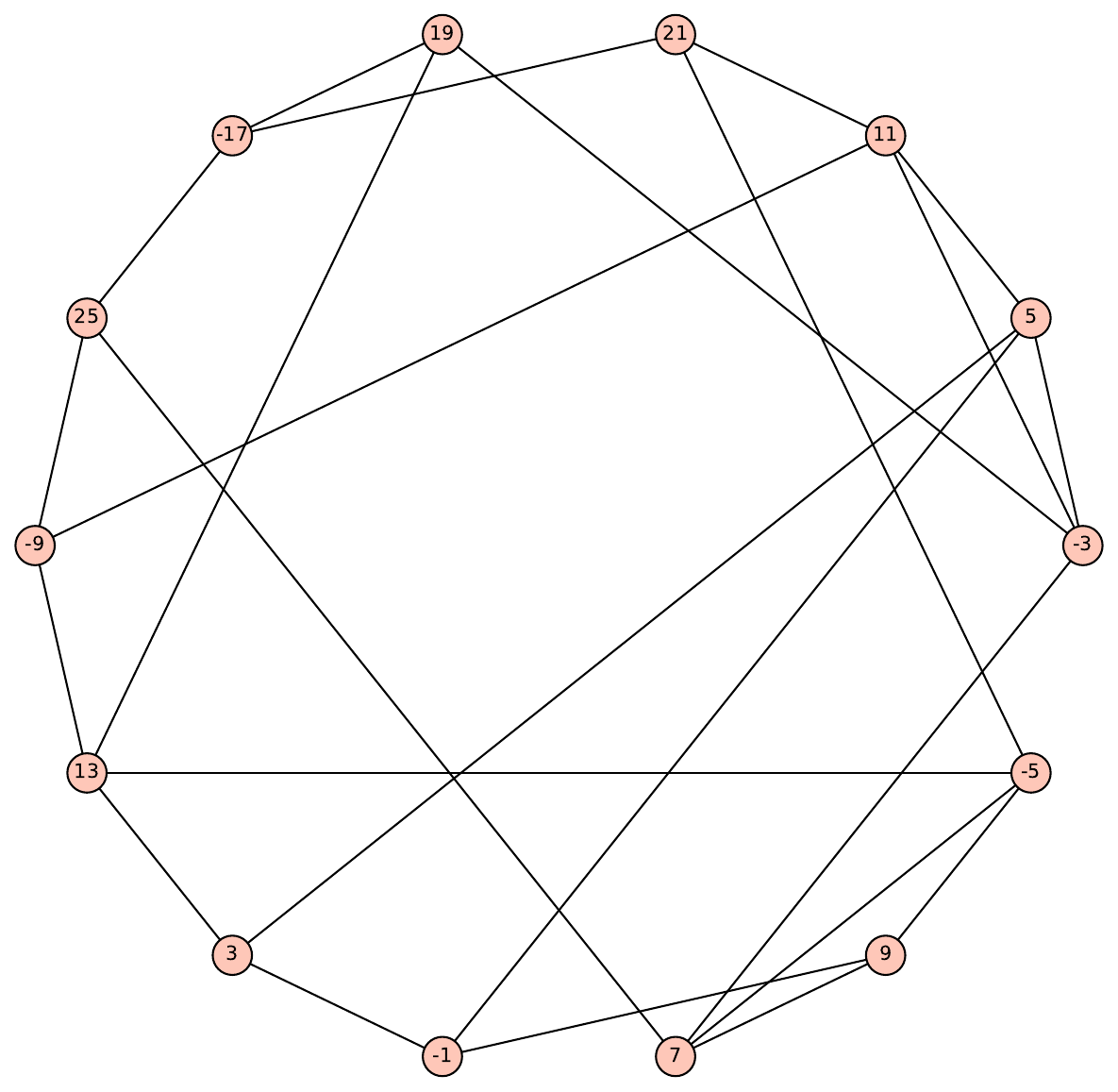} & 
\includegraphics[width=.45\linewidth]{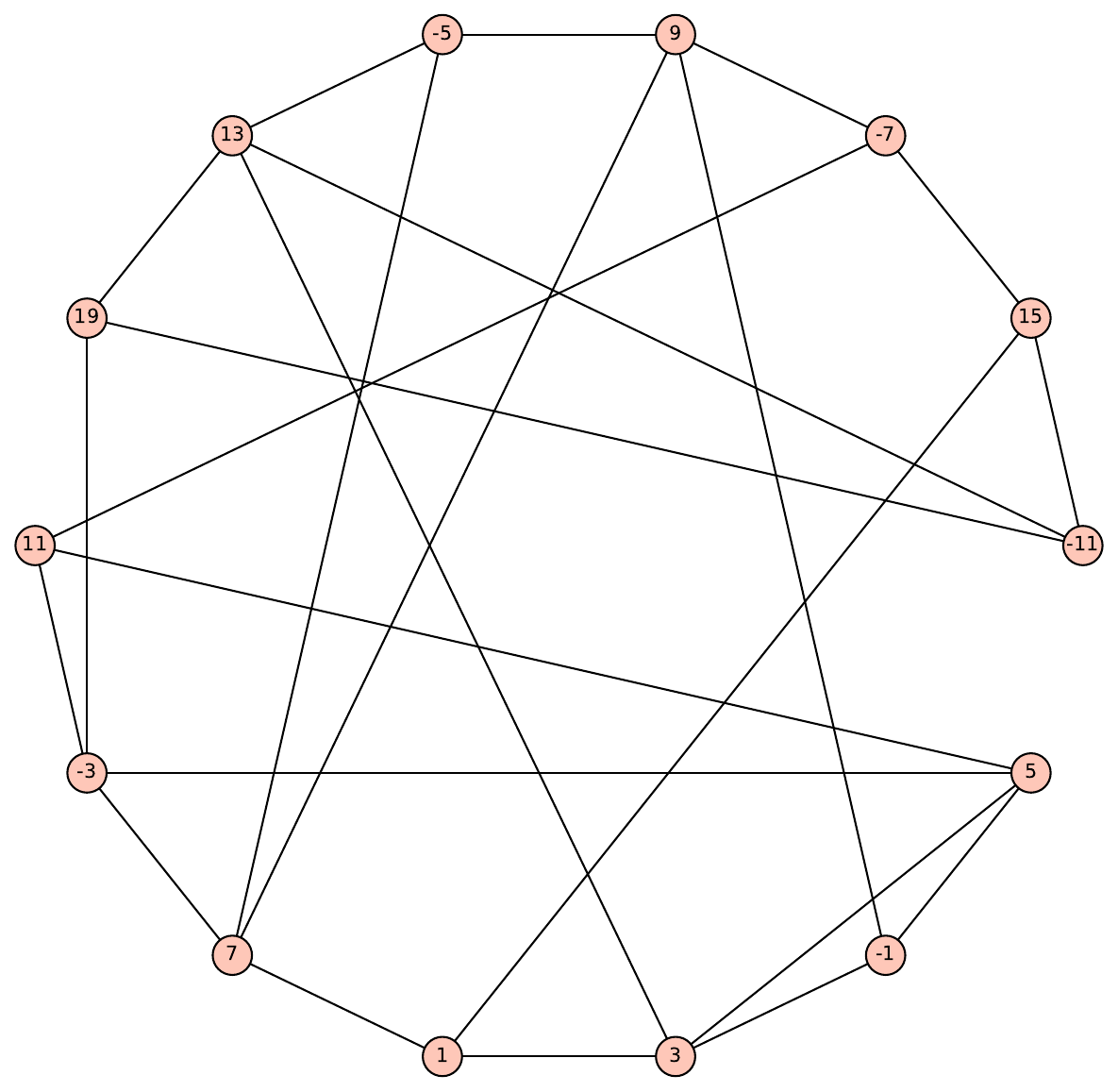} \\
\includegraphics[width=.45\linewidth]{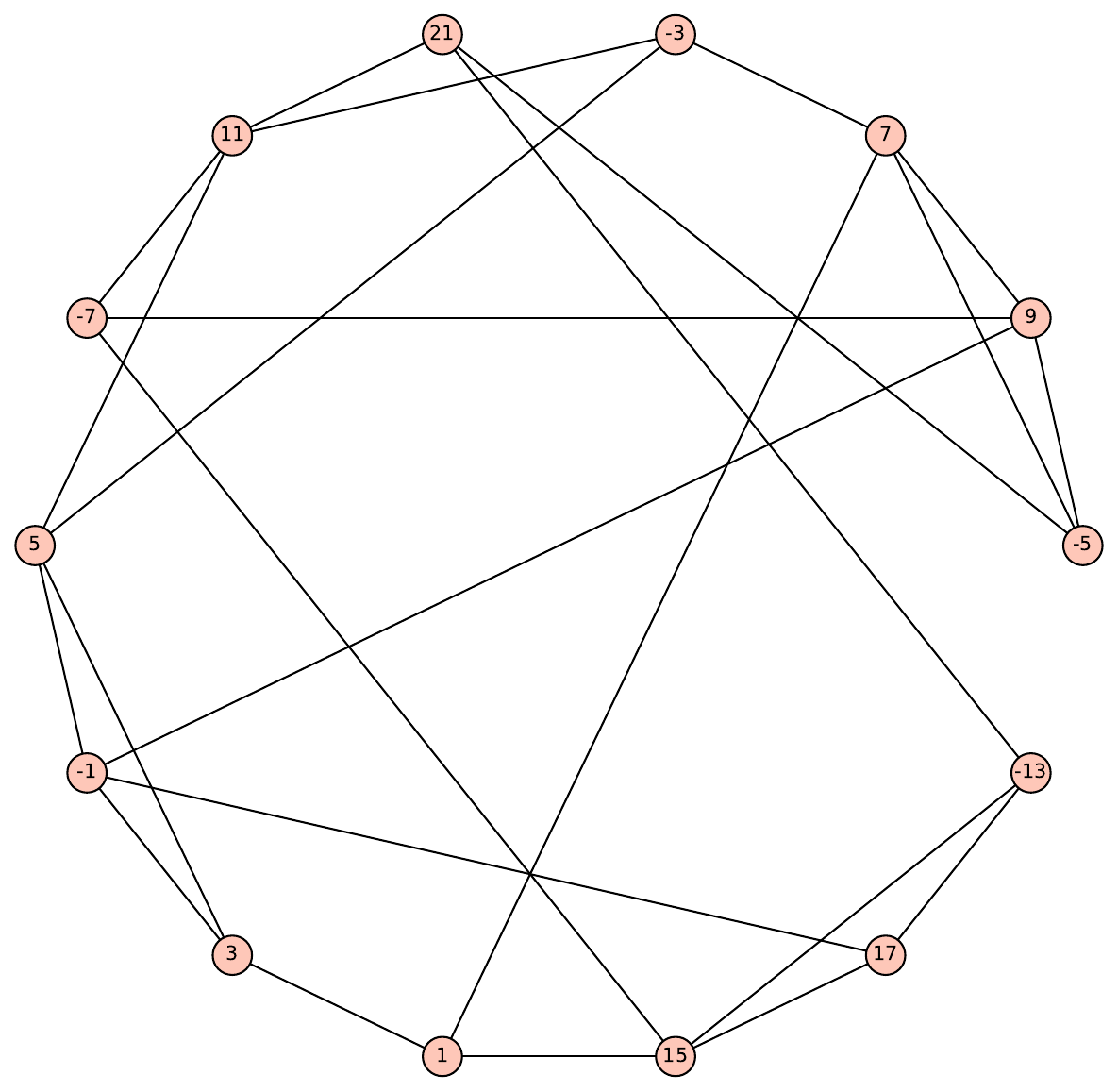} & 
\includegraphics[width=.45\linewidth]{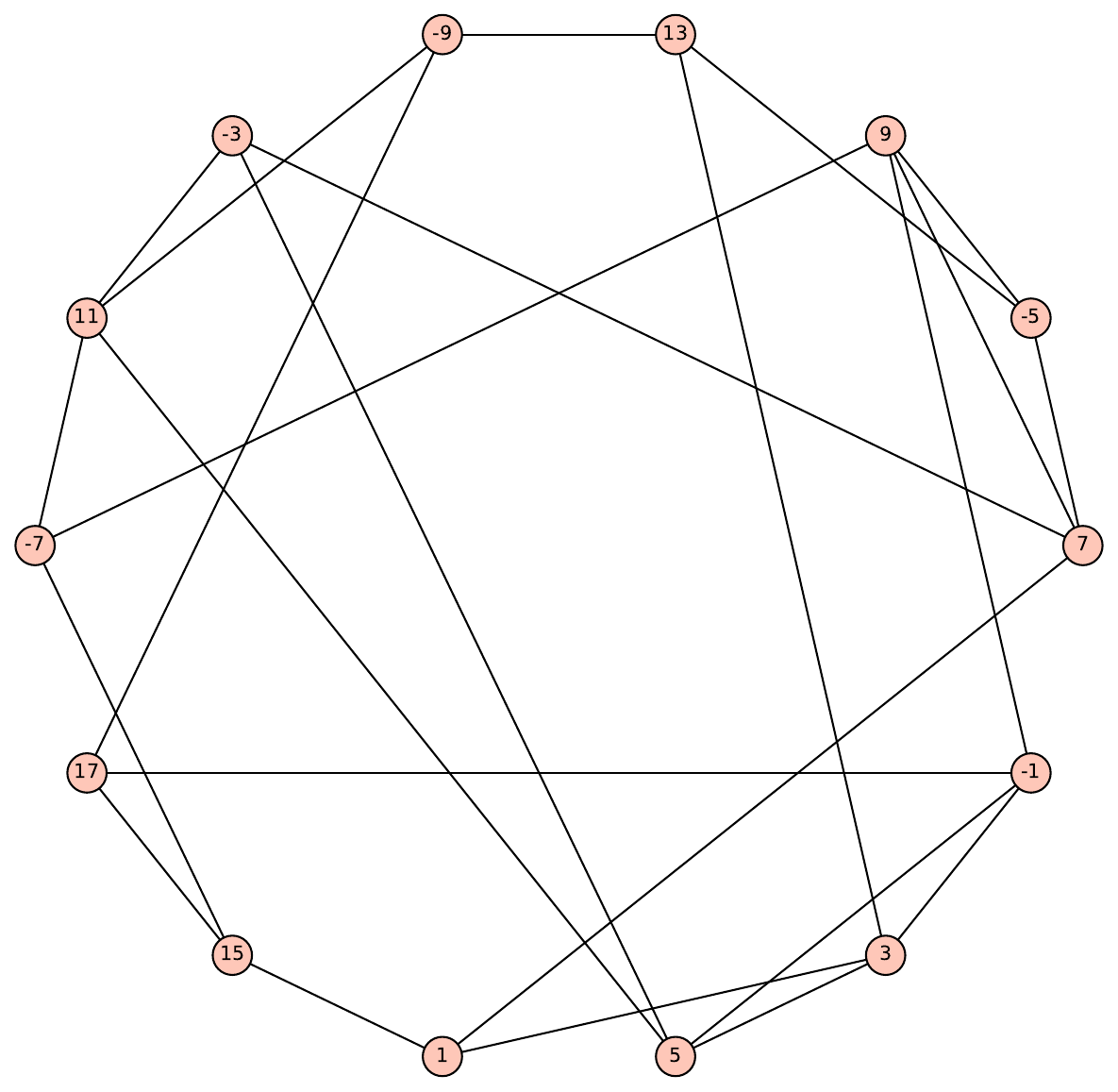}
\end{tabular}
\caption{Maximum admissible graphs of order $14$ with the corresponding vertex labelings.}
\label{fig:msg14}
\end{figure}

From the definition of $g(n)$, it follows that the maximum admissible graphs (MAG) of order $n$ have size $g(n)$ each.
All MAGs of each order $n\leq 14$ can be obtained directly from the candidate graphs generated by \texttt{nauty}.
In particular, for $n=14$ we can restrict our attention to the $2,184$ candidate graphs of minimum degree $3$, among which we identified only $4$ admissible graphs (Fig.~\ref{fig:msg14}).

To construct MAGs of order $n\in\{15,16,17,18\}$, which by Theorem~\ref{th:gbound}(I) have the minimum degree $\geq g(n)-g(n-1)$, we generate and test for admissibility candidate graphs of two types: 
\begin{itemize}
    \item \emph{extended graphs} resulted from adding a vertex of degree $g(n)-g(n-1)$ to a MAG of order $n-1$; and 
    \item \emph{denovo graphs} with the minimum degree $\geq g(n)-g(n-1)+1$ generated by \texttt{nauty}.
\end{itemize}
Clearly, denovo candidate graphs may exist only if $n(g(n)-g(n-1)+1)\leq 2g(n)$, i.e., $g(n)\leq \frac{n}{n-2}(g(n-1)-1)$.

For $n=15$, 
there are $124$ extended and $33,608$ denovo candidate graphs, among which we identified $20$ and $8$ MAGs, respectively.

For $n=16$, there are $243$ extended and no denovo candidate graphs, delivering just two MAGs (Fig.~\ref{fig:msg16}). 
We use these MAGs to establish $g(17)=31$ as explained in Section~\ref{sec:gofn}.

For $n=17$, there are $82$ extended and $5,847,706$ denovo candidate graphs, delivering $11$ and $7$ MAGs, respectively.

For $n=18$, there are $287$ extended and no denovo candidate graphs, delivering just two MAGs.
We use these MAGs to establish $g(19)=36$ as explained in Section~\ref{sec:gofn}.
Benchmarks of these computations 
are summarized in Table~\ref{tab:statMAG}.

For MAGs of order $19$, we obtain $15$ MAGs of minimum degree $2$ from the extended graphs. However, the number of denovo candidate graphs appears to be prohibitively large. We therefore take a different route by first computing \emph{sub-maximum addmissible graphs} (sub-MAGs) that have size just one less than that of MAGs. Namely, we compute:
\begin{itemize}
    \item sub-MAGs of order $14$ composed of $400$ graphs of minimum degree $2$ extended from MAGs of order $13$ and $34$ denovo graphs of minimum degree $3$;
    \item sub-MAGs of order $15$ composed of $54$ graphs of minimum degree $1$ extended from MAGs of order $14$, $1597$ graphs of minimum degree $2$ extended from sub-MAGs of order $14$, and $70$ denovo graphs of minimum degree $1$;
    \item sub-MAGs of order $16$ composed of $144$ graphs of minimum degree $2$ extended from MAGs of order $15$ and $36$ denovo graphs of minimum degree $3$;
    \item sub-MAGs of order $17$ composed of $32$ graphs of minimum degree $1$ extended from MAGs of order $16$, $909$ graphs of minimum degree $2$ extended from sub-MAGs of order $16$, $0$ denovo graphs of minimum degree $3$ without a connected pair of degree-$3$ vertices,\footnote{Here any denovo graph has $\geq 4\cdot 17 - 2\cdot 30 = 8$ independent degree-$3$ vertices, whose removal results in a graph of order $9$ and size $6$. We generate all $50$ such graphs, and then extend them with $8$ independent degree-$3$ vertices to construct the candidate denovo graphs.} and $131$ graphs extended from sub-MAGs of order $15$ with a connected pair of degree-$3$ vertices;
    \item sub-MAGs of order $18$ composed of $124$ graphs of minimum degree $2$ extended from MAGs of order $17$ and $50$ graphs of minimum degree $3$ extended from sub-MAGs of order $17$.
\end{itemize}
We then construct MAGs of order $19$ and minimum degree $3$ by extending sub-MAGs of order $18$, resulting in $7$ such MAGs.

MAGs of order $20$ have the minimum degree $3$ and are the result of an extension of MAGs of order $19$, which gives $2$ such MAGs.
We use these MAGs to establish $g(21)=41$ as explained in Section~\ref{sec:gofn}.

\begin{figure}[!t]
\begin{tabular}{cc}
\includegraphics[width=.45\linewidth]{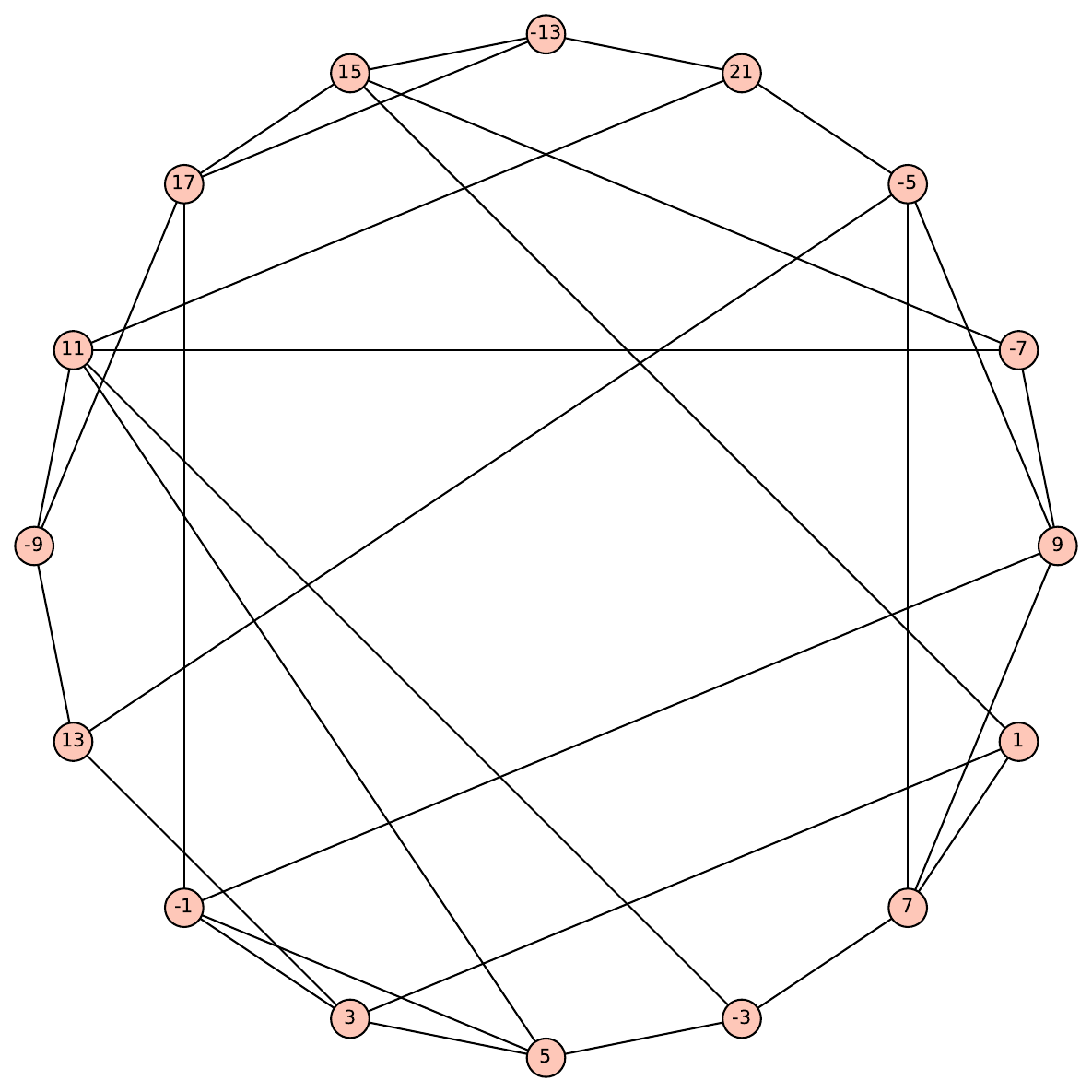} & 
\includegraphics[width=.45\linewidth]{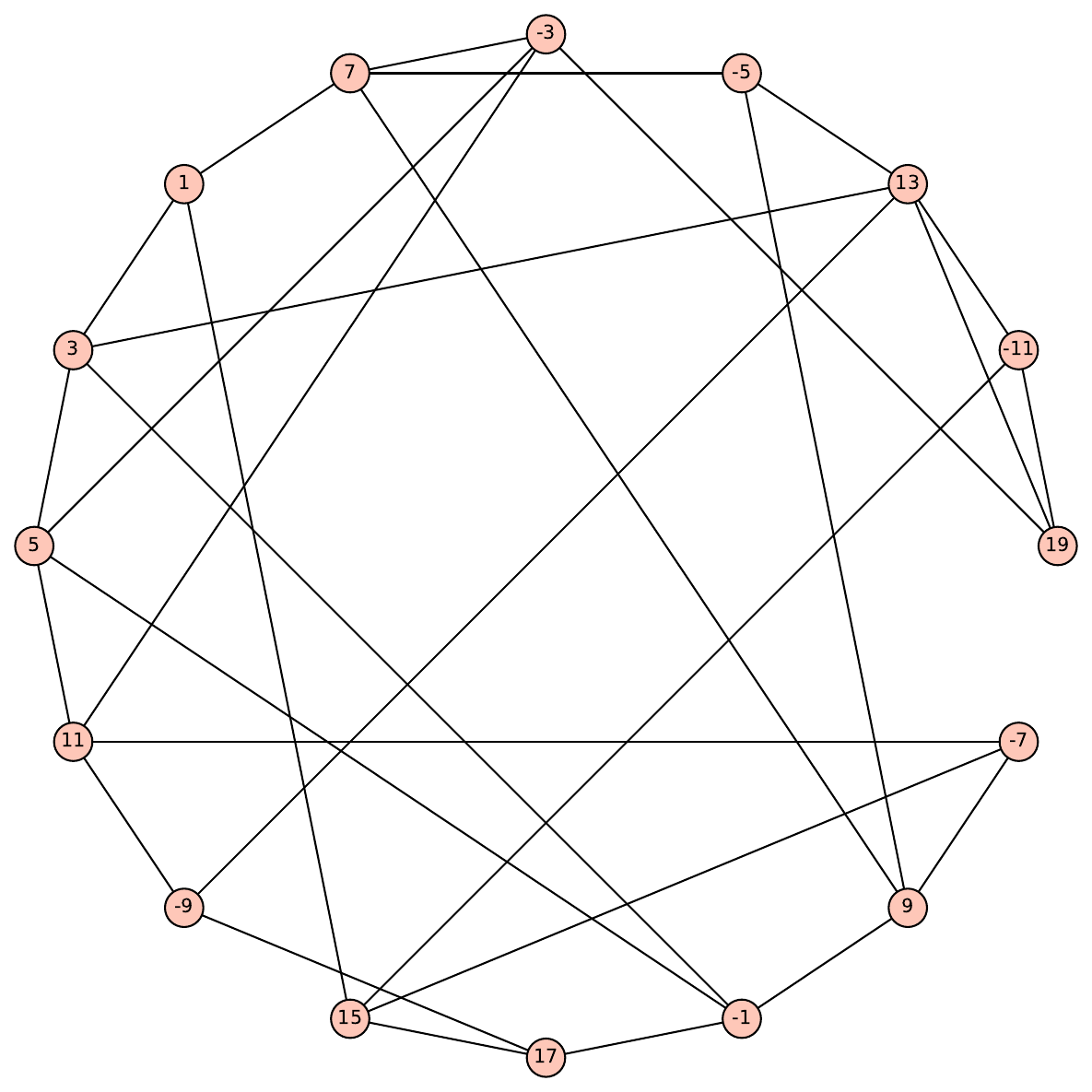}
\end{tabular}
\caption{Maximum admissible graphs of order $16$ with the corresponding vertex labelings.}
\label{fig:msg16}
\end{figure}

\section{Discussion}

In this work, we propose the algorithm {\sc SolveInPowers}~\ref{alg:SolveInPowers} to solve a given system of equations and inequations in powers of 2, 
and applied it to the problem of determining the admissibility of a given graph with respect to labeling its vertices with pairwise distinct integers summing to a power of 2 for each edge
(as outlined in the {\sc GraphSolve} algorithm~\ref{alg:GraphSolve}).
The algorithm {\sc SolveInPowers} can be easily extended to the powers of primes other than $2$, and further to the powers of an arbitrary positive integer $b$ by introducing independent
variables for the powers of each prime factor of $b$.
Furthermore, while we applied {\sc SolveInPowers} to \emph{linear} equations and inequations, 
the linearity appears to be inessential making it just a technical task to extend the algorithm to higher-degree polynomial equations and inequations.

The performance of {\sc SolveInPowers} is sensitive to the number of nonzero coefficients in the equations $E$, 
which in the underlying {\sc GraphSolve} algorithm~\ref{alg:GraphSolve} correspond to nonzero elements of $K_l$. 
Although $K_l$ can be any basis of the left kernel of the given graph's incidence matrix, we found that upfront LLL reduction of the basis helps reduce the number of nonzero elements in $K_l$ 
and noticeably improves performance.
We have also explored the idea of performing an LLL reduction of the equations in $E$ after each substitution, but it did not seem to provide much benefit.

Theorem~\ref{th:gbound} easily generalizes to graphs with a property that is invariant with respect to \emph{induced} subgraphs (i.e., if a graph has such a property, then so does each of its induced subgraphs).
In particular, if $h(n)$ denotes the maximum size of a squarefree graph on $n$ vertices (OEIS {\tt A006855}), then $h(n) \leq \left\lfloor h(n-1)\cdot n/(n-2)\right\rfloor$. 
At the same time, it is known that $h(n)$ grows proportionally to $n^{3/2}$ and satisfies $h(n) \leq \left\lfloor n(1+\sqrt{4n-3})/4\right\rfloor$~\cite{Reiman1958,Aigner2018}, which also gives an upper bound for $g(n)$.
Therefore, we cannot expect better asymptotic bounds from the inequality in Theorem~\ref{th:gbound}(II) alone.

An implementation of the proposed algorithms in SageMath~\cite{Sage} and the computed graphs data are available at~\url{https://github.com/maxale/power2_graphs}.

\section*{Acknowledgements}

The author thanks Neil Sloane for his nice introduction to the problem~\cite{Haran2022} and proofreading of the earlier version of this paper. An extended abstract of this work has been published in the proceedings of the \emph{$35^\text{th}$ International Workshop on Combinatorial Algorithms (IWOCA)}~\cite{Alekseyev2024IWOCA}.

\bibliographystyle{plainurl}
\bibliography{power2_graphs}

\end{document}